\newcommand{\lb}{\langle}
\newcommand{\rb}{\rangle}
\def\toup{\nearrow}
\newcommand\osc{\mathrm{osc}}
\newcommand\cyr{%
\renewcommand\rmdefault{wncyr}%
\renewcommand\sfdefault{wncyss}%
\renewcommand\encodingdefault{OT2}%
\normalfont \selectfont} \DeclareTextFontCommand{\textcyr}{\cyr}
\newcommand\del[1]{}
\newcommand\Reddel[1]{}
\def\eps{\varepsilon}
\theoremstyle{plain}
\newtheorem{theorem}{Theorem}[section]
\theoremstyle{remark}
\newtheorem{remark}[theorem]{Remark}
\theoremstyle{plain}
\newtheorem{corollary}{Corollary}[section]
\newtheorem{lemma}[theorem]{Lemma}
\newtheorem{proposition}[theorem]{Proposition}
\begin{document}
\baselineskip 17.5pt 
\numberwithin{equation}{section}

\title[OU process driven by a L{\'e}vy \del{white}noise]
{Time irregularity of generalized Ornstein--Uhlenbeck processes}

\author[Z Brze{\'z}niak]{Zdzis{\l}aw Brze{\'z}niak}
\address{Department of Mathematics\\
The University of York\\
Heslington, York YO10 5DD, UK} \email{zb500@york.ac.uk}

\author[B Goldys]{Ben Goldys}
\address{School of Mathematics\\
The University of New South Wales\\
Sydney 2052, Australia} \email{B.Goldys@unsw.edu.au}

\author[P Imkeller]{Peter Imkeller}
\address{Institut f\"ur Mathematik,
Humboldt-Universitaet zu Berlin, Unter den Linden 6
10099 Berlin, Germany}
\email{imkeller@mathematik.hu-berlin.de}

\author[S Peszat] {Szymon Peszat}
\address{Institute of Mathematics, Polish Academy of Sciences,
\'Sw. Tomasza 30/7, 31-027 Krak\'ow, Poland} \email{napeszat@cyf-kr.edu.pl}

\author[E Priola] {Enrico Priola}
\address{Dipartimento di Matematica, Universita di Torino,
via Carlo Alberto 10, 10-123 Torino, Italy} \email{enrico.priola@unito.it}

\author[J Zabczyk]{Jerzy Zabczyk}
\address{Institute of Mathematics, Polish Academy of Sciences, P-00-950 Warszawa, Poland}
\email{zabczyk@impan.pl}

\thanks{ \noindent \! \!   Supported by the Polish
Ministry of Science and Education project 1PO 3A 034 29 ``Stochastic
evolution equations with \! \! L\'evy noise'' and by EC FP6 Marie Curie ToK programme SPADE2}

\keywords{L{\'e}vy white noise, subordinator process, Langevin equation, space regularity, stochastic heat equation, non-c{\`a}dl{\`a}g trajectories,
stochastic Burgers equation}


\begin{abstract}
The paper is concerned  with the properties of solutions to linear
evolution equation perturbed by cylindrical L{\'e}vy processes. It turns out
that solutions, under rather weak requirements, do not have
c{\`a}dl{\`a}g modification. Some  natural open questions are
 also  stated.
\end{abstract}

\date{\today}

\maketitle

\section{Introduction}
\label{sec_intro}

In the study of spdes with L{\'e}vy noise a special role is played
by linear stochastic equations:
\begin{equation}
\label{eqn_langevin_000} \left\{\begin{array}{rcl}
dX(t)&=& A X(t)\,dt +dL(t),\; t\geq 0,\\
X(0)&=&0\in H,
\end{array}\right.
\end{equation}
on a Hilbert space $H$. In (\ref{eqn_langevin_000}) $A$ stands for
an infinitesimal generator of a $C_0$- semigroup $S(t)$ on $H$ and
$L$ is a L{\'e}vy process with values in   a Hilbert space $U$
often different and larger than $H$. The weak solution to
(\ref{eqn_langevin_000}) is of the form, see e.g.
\cite{Peszat_Zabczyk_2007},
\begin{equation}
\label{eqn_langevin_001} X(t) = \int_0^t S(t-s) dL(s),\,\, t\geq 0.
\end{equation}
The time regularity of the process $X$ is of prime interest in the
study of non-linear stochastic  PDEs, see e.g.
\cite{Priola+Z_2009}. If the L{\'e}vy process $L$ takes values in
$H$ than the solution $X$ has $H$-c{\`a}dl{\`a}g trajectories
because of the maximal inequalities for stochastic convolutions due
to Kotelenez \cite{Kotelenez_1987}, see also
\cite{Peszat_Zabczyk_2007}. However the process $X$  can take
 values in $H$   even if the space $U$ is larger than $H$ and $L$ does not
evolve in $H$. It turns out that if $L$ is the so called L\'evy
white noise the process $X$ does not have $H-$ c{\`a}dl{\`a}g
trajectories, see \cite{Brz+Z_2009} and \cite{Peszat_Zabczyk_2007},
although it may have a version taking values in a subspace of $H$ of
rather regular elements. The main reason for   this phenomenon
was the fact that the process $L$ had jumps not belonging to the
space $H$. It was therefore natural  to conjecture that if the jumps
of the process $L$ belong to $H$ than the c{\`a}dl{\`a}g
modification of $X$ should exist. The present paper shows that this
is not always  the case, and that in fact  the problem of characterizing equations
(\ref{eqn_langevin_000}) which solutions have c{\`a}dl{\`a}g
modification is still open. This is true even in the diagonal case
for which, in the Gaussian case, there are satisfactory answers, see
\cite{Iscoe_1990}, \cite{DaPrato+Zabczyk}. \vspace{3mm}

In the present paper we consider
 a class of processes $L$ which have expansion of the form
\begin{equation}
\label{eqn_langevin_002} L(t) = \sum_{n=1}^{\infty}\beta_n L^n (t) e_n ,\,\, t\geq 0.
\end{equation}
where $L^n$ are independent, identically distributed,
c{\`a}dl{\`a}g, real valued L\'evy processes with the jump intensity
$\nu$  not identically 0. Here $(e_n)$ is an orthonormal
basis in $H$ and $\beta_n$ is a sequence of positive numbers. It is
not difficult to see that the jumps of the process $L$ belong to $H$
but only under special assumptions the process $L$  evolves in $H$.
We show that, in general, the process $X$ does not have an $H$--
c{\`a}dl{\`a}g modification. The case of stochastic heat equation
will be considered with some detail.

\section{Main theorem}


The main result of the paper is the following theorem

\begin{theorem}\label{thm-non_cadlag1}
 Assume that the process $X$ in \eqref{eqn_langevin_001}
 is an  $H$-valued  process
   and that the elements of the basis $(e_n)$    belong
to the domain $D(A^*)$
 of the operator $A^*$ adjoint to $A$. If $\beta_n$
  do not converge to $0$,
  then, with probability 1, trajectories of $X$ have no point $t
   \in [0, +\infty)$
      in
  which there exists the left limit $X(t-) \in H$ or the right limit
 $X(t+) \in H$.
\end{theorem}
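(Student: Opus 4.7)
My plan is to project onto basis coordinates and then exploit the fact that, since $\beta_n\not\to 0$, infinitely many coordinates of $L$ carry macroscopic jumps that are dense in every time interval, obstructing any $H$-convergence. Because $e_n\in D(A^*)$, the weak formulation of \eqref{eqn_langevin_000} applied to the test vector $e_n$ gives, for each $n$ and each $t\geq 0$,
\[
\langle X(t),e_n\rangle=\int_0^t\langle X(s),A^*e_n\rangle\,ds+\beta_n L^n(t)\quad\text{a.s.}
\]
I would fix a c{\`a}dl{\`a}g version $Y_n$ of the right-hand side and, without loss of generality, replace $X$ by the modification $\tilde X(t):=\sum_n Y_n(t)\,e_n$, defined arbitrarily where the series fails to converge in $H$. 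Each $Y_n$ is then a real c{\`a}dl{\`a}g semimartingale with jumps $\Delta Y_n(r)=\beta_n\Delta L^n(r)$, and $Y_n(t)=\langle\tilde X(t),e_n\rangle$ whenever $\tilde X(t)\in H$.

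Since $\nu\not\equiv 0$, pick $a>0$ with $\lambda:=\nu(\{|y|>a\})>0$, and since $\beta_n\not\to 0$, pick $c>0$ and a subsequence $(n_k)$ with $\beta_{n_k}\geq c$. The jumps of $L^{n_k}$ of modulus exceeding $a$ are governed by independent rate-$\lambda$ Poisson processes; a Borel--Cantelli argument applied to a countable family of rational-endpoint subintervals of $[0,+\infty)$ then yields that, almost surely, the set
\[
T:=\bigl\{r\geq 0:\ |\Delta L^{n_k}(r)|>a\ \text{for some }k\geq 1\bigr\}
\]
accumulates at every point of $[0,+\infty)$ from both sides.

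For the contradiction, suppose that on a positive-probability event there exists $t\geq 0$ with $X(t+)\in H$; the $X(t-)$ case is symmetric. Given $\eps>0$, pick $\delta>0$ with $\|X(s)-X(t+)\|_H<\eps$ for every $s\in(t,t+\delta]$. Projecting onto $e_{n_k}$ gives $|Y_{n_k}(s)-\langle X(t+),e_{n_k}\rangle|<\eps$ on that interval, and a left limit in $s$ yields the same bound for $Y_{n_k}(s-)$. Picking $r\in T\cap(t,t+\delta)$ with a witness $k$ yields
\[
ca\leq\beta_{n_k}|\Delta L^{n_k}(r)|=|\Delta Y_{n_k}(r)|\leq 2\eps,
\]
which contradicts the choice $\eps<ca/2$.

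The main delicacy I anticipate is the pathwise identification $Y_n(s)=\langle X(s),e_n\rangle$ holding simultaneously in $s$ rather than merely at each fixed $s$ modulo a null set, since the argument must be applied at a random time and along random sequences of big-jump times. This is circumvented by fixing the modification $\tilde X$ from the first step once and for all and exploiting the c{\`a}dl{\`a}g regularity of the real-valued $Y_n$, after which all remaining estimates are pathwise on a single full-probability event.
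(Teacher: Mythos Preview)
Your proof is correct and follows essentially the same route as the paper's: project onto coordinates via the weak formulation, pass to a subsequence with $\beta_n$ bounded below, use Borel--Cantelli over rational subintervals to obtain density of the large-jump times, and contradict the oscillation bound that a one-sided limit would impose. The only cosmetic differences are that the paper tracks the \emph{first} large jump time of each $L^n$ (rather than the whole set $T$) and packages the density statement as a separate lemma, and it does not explicitly discuss the modification issue you flag in your final paragraph.
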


\begin{corollary}\label{thm-non_cadlag2} Assume that
 the hypotheses of Theorem  \ref{thm-non_cadlag2} hold.
  Then  the process $X$
  has no an $H$--c{\`a}dl{\`a}g modification.
\end{corollary}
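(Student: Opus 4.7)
The corollary follows almost directly from Theorem \ref{thm-non_cadlag1}, so my plan is to argue by contradiction. Suppose there exists an $H$-\cadlag modification $\tilde{X}$ of $X$. The first step is to observe that the stochastic convolution in \eqref{eqn_langevin_001} determines $X(t)$ only as an equivalence class of random variables in $H$ for each fixed $t$. Hence any modification $\tilde{X}$ of $X$ automatically satisfies $\tilde{X}(t) = \int_0^t S(t-s)\, dL(s)$ almost surely for every $t \geq 0$, so $\tilde{X}$ is itself a process of the form considered in Theorem \ref{thm-non_cadlag1}, with the same L\'evy noise $L$, the same semigroup $S(\cdot)$, the same basis $(e_n)$, and the same coefficients $(\beta_n)$.

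Applying Theorem \ref{thm-non_cadlag1} to $\tilde{X}$ then yields that with probability 1 the trajectories of $\tilde{X}$ possess no point $t \in [0,+\infty)$ at which either $\tilde{X}(t-) \in H$ or $\tilde{X}(t+) \in H$ exists. This directly contradicts the assumption that $\tilde{X}$ has \cadlag paths in $H$, since by definition \cadlag trajectories admit both one-sided limits in $H$ at every point of $[0,+\infty)$. The resulting contradiction completes the proof.

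The only subtlety I would verify is that the conclusion of Theorem \ref{thm-non_cadlag1} is genuinely a statement about \emph{every} measurable version of the stochastic convolution rather than being tied to a particular pointwise choice. Since the hypotheses of that theorem concern only the distributional data $(A,(e_n),(\beta_n),L)$ together with the integral representation \eqref{eqn_langevin_001}, which is itself defined modulo null sets at each fixed $t$, this verification is routine and I anticipate no genuine obstacle; the entire content of the corollary really is packaged inside the theorem.
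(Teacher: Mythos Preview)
Your argument is correct and matches the paper's treatment: the paper gives no separate proof of the corollary, regarding it as an immediate consequence of Theorem~\ref{thm-non_cadlag1}, and your contradiction argument via a putative c\`adl\`ag modification is exactly the implicit reasoning. The subtlety you flag---that the theorem applies to any version of the stochastic convolution---is handled by the fact that the proof of Theorem~\ref{thm-non_cadlag1} uses only the weak-solution identity \eqref{gt}, which holds for every modification.
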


\begin{remark}\label{rem-Priola+Z_2008p}Consider an important  case when the operator $A$ is self-adjoint with eigenvectors $e_n$ and
the corresponding eigenvalues $-\lambda_n
<0,\,\,n=1,2,\ldots $ tending to $-\infty$. Denote by $X^n$ the $\mathbb{R}$-valued Ornstein--Uhlenbeck  process defined by
\begin{equation}
\label{eqn_langevin_01} \left\{\begin{array}{rcl}
dX^n(t)&=&-\lambda_n X^n(t)\,dt +\beta_n dL^n(t),\; t\geq 0,\\
X^n(0)&=&0,
\end{array}\right.
\end{equation}
and identify $H$ with $l^2$. The regularity of the process $X(t) = (X^n(t))$ with  $L^n$ independent Wiener processes was considered in the paper
\cite{Iscoe_1990} were conditions, close to necessary and sufficient,  for continuity of trajectories, were given. If the processes $\big(L^n\big)$,
$n\in\mathbb{N}$,  are without gaussian part, i.e. of pure jump type,
 and $\nu$ is a symmetric measure, then the necessary and sufficient conditions for
the process $(X^n(t))$ to
 take  values in $l^2$ are given in a recent paper \cite{Priola+Z_2008p}.
\end{remark}

The proof of the theorem will be a consequence of two  lemmas.
 The first of them is a variant of the well-known Cauchy criterium
  for the existence of limit.

\begin{lemma}\label{lem}
 A function $f\colon [0, + \infty) \to E$, where $E$ is a  Banach space
(with norm denoted by $\Vert\cdot\Vert)$ admits left limit at some
 $t>0$
 (respec. right limit at some $s\ge 0$)
  if and only if
 for an arbitrary $\eps>0$ there exists $\delta >0$
  such that
\begin{equation}
\label{eqn_cadlag-var} \osc_f((t- \delta, t))<\eps \;\; (\! \mbox{
 respec.
  } \; \osc_f((s, s+ \delta))<\eps),
\end{equation}
where, for $\Gamma \subset [0,1]$,
$$\osc_f(\Gamma):= \sup_{s,t\in \Gamma}\Vert
f(t)-f(s)\Vert.$$
\end{lemma}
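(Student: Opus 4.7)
The plan is to establish the equivalence in both directions by a standard Cauchy-type argument, using the completeness of the Banach space $E$. I will treat only the left-limit case in detail; the right-limit statement is proved identically.

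For the easy direction ($\Rightarrow$), I suppose that the left limit $\ell := \lim_{s \uparrow t} f(s)$ exists in $E$. Given $\eps > 0$, by definition of limit there is $\delta > 0$ with $\Vert f(s) - \ell \Vert < \eps/2$ for every $s \in (t-\delta, t)$. The triangle inequality then immediately gives $\Vert f(s) - f(u) \Vert < \eps$ for all $s,u \in (t-\delta,t)$, i.e. $\osc_f((t-\delta,t)) \le \eps$. Renaming $\eps$ yields the conclusion.

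For the harder direction ($\Leftarrow$), the plan is to pick an arbitrary sequence $s_n \uparrow t$ with $s_n < t$ and show that $(f(s_n))$ is Cauchy in $E$. Given $\eps > 0$, the hypothesis provides $\delta > 0$ with $\osc_f((t-\delta,t)) < \eps$; choosing $N$ so that $s_n \in (t-\delta,t)$ for all $n \ge N$ gives $\Vert f(s_n) - f(s_m) \Vert < \eps$ whenever $n,m \ge N$. Completeness of $E$ then yields a limit $\ell \in E$. A second, short argument is needed to show that $\ell$ does not depend on the choice of approximating sequence: if $s_n', s_n \uparrow t$, interleave them into a single sequence $s_n''$, which is again Cauchy by the same oscillation bound, so its subsequences $(f(s_n))$ and $(f(s_n'))$ share a limit. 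Finally, to upgrade the sequential left limit to an actual limit $\lim_{s\uparrow t} f(s) = \ell$, given $\eps$ and the corresponding $\delta$ from the oscillation hypothesis, pick any sequence $s_n \uparrow t$ in $(t-\delta,t)$; for $s \in (t-\delta,t)$ the oscillation bound gives $\Vert f(s) - f(s_n) \Vert < \eps$, and passing to the limit in $n$ yields $\Vert f(s) - \ell \Vert \le \eps$.

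There is no real obstacle here: the argument is the classical Cauchy-sequence-completeness routine, and completeness of $E$ is used in an essential way. The only mild care needed is in the last step above, namely passing from Cauchyness along sequences $s_n \uparrow t$ to the existence of the functional left limit, which requires the independence-of-sequence verification outlined above. The right-limit case is proved by applying the same argument to the interval $(s, s+\delta)$ in place of $(t-\delta, t)$.
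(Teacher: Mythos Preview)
Your argument is correct and is exactly the standard Cauchy-criterion proof one expects here. The paper itself does not prove this lemma at all: it simply states it, describing it as ``a variant of the well-known Cauchy criterium for the existence of limit,'' so there is nothing to compare against beyond noting that your write-up supplies the routine details the authors chose to omit.
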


\begin{lemma}\label{lem-3}
Assume that for some $0<r_1<\infty$,
 $\nu((-\infty,-r_1] \cup [r_1,\infty))>0$.
 Let $\tau_{n}$ denote the {\it first}  jump
  of the process $L_n$ of magnitude
at least $r_1$, in particular
$$\vert\Delta L^n(\tau_{n})\vert \geq r_1.$$
Then, with probability $1$,  the set
$$\{\tau_{n}\colon n \in\mathbb{N}^\ast\}$$
is dense in the interval $(0, + \infty)$.
\end{lemma}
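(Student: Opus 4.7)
The plan is to reduce the statement to a classical fact about i.i.d.\ exponential random variables. By the L\'evy--It\^o decomposition, for each $n$ the jumps of $L^n$ of magnitude at least $r_1$ form a Poisson point process on $(0,\infty)$ with intensity
\[
\alpha := \nu((-\infty,-r_1] \cup [r_1,\infty)) > 0,
\]
so the first such jump time $\tau_n$ is exponentially distributed with parameter $\alpha$. Since the processes $L^1, L^2, \ldots$ are independent, the random variables $(\tau_n)_{n \in \mathbb{N}^*}$ are i.i.d.\ with this exponential law.

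Next I would fix an arbitrary bounded open interval $(a,b) \subset (0,+\infty)$ with rational endpoints. For each $n$,
\[
p := \mathbb{P}(\tau_n \in (a,b)) = e^{-\alpha a} - e^{-\alpha b} > 0.
\]
By independence,
\[
\mathbb{P}\bigl(\tau_n \notin (a,b) \text{ for all } n \in \mathbb{N}^*\bigr) = \lim_{N\to\infty}(1-p)^N = 0,
\]
so a.s.\ at least one $\tau_n$ lies in $(a,b)$. (Equivalently, the second Borel--Cantelli lemma applies to the independent events $\{\tau_n \in (a,b)\}$, whose probabilities sum to $+\infty$.)

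Finally, I would intersect over the countable family $\mathcal{I}$ of all open intervals in $(0,+\infty)$ with rational endpoints. The intersection still has probability one, and on this event the set $\{\tau_n : n \in \mathbb{N}^*\}$ meets every $I \in \mathcal{I}$, hence is dense in $(0,+\infty)$.

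There is essentially no obstacle in this argument; the only point that requires a brief justification is the exponential distribution of $\tau_n$, which is a standard consequence of the Poisson random measure representation of a L\'evy process (the jumps of magnitude at least $r_1$ form a compound Poisson process of rate $\alpha$). Everything else is a direct combination of independence and the countability of $\mathcal{I}$.
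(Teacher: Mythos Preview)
Your argument is correct and essentially identical to the paper's: both observe that the $\tau_n$ are i.i.d.\ exponential, fix an interval with rational endpoints, use independence (via Borel--Cantelli or the direct product computation you give) to show some $\tau_n$ falls in it almost surely, and then intersect over the countable family of such intervals. The only cosmetic difference is that you compute $p=e^{-\alpha a}-e^{-\alpha b}$ explicitly and take the product to zero, whereas the paper invokes the second Borel--Cantelli lemma directly---an alternative you also mention.
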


\begin{proof} Recall that   $\tau_{n}$, $n\in\mathbb{N}^\ast$,
  are independent and
 exponentially distributed with  parameter $\lambda =
  \nu ( \{ t \in \mathbb{R} \colon  |t| \geq r_1 \}  )$
 (independent of $n$). Let
 $\alpha,\beta \in \mathbb{Q}$ be such that $0<\alpha<\beta $ and
let
$$
 A_n:=\{ \tau_{n} \in (\alpha,\beta)\}.
$$
Then, for any $n \ge 1$,  $\mathbb{P}(A_n) = \mathbb{P}(A_1)\in
(0,1)$, $A_n$ are independent events  and $\sum_{n=1}^\infty
\mathbb{P}(A_n) = \infty$.  Consequently, by the second
Borel--Cantelli Lemma, with probability $1$, there exists $n
\in\mathbb{N}^\ast$, such that $\tau_n \in (\alpha,\beta)$. Since
the family
$$
\{(\alpha,\beta)\colon  \alpha,\beta \in \mathbb{Q}, \;
0<\alpha<\beta\}
$$
is countable, the result follows.
\end{proof}

\begin{proof}[Proof of Theorem \ref{thm-non_cadlag1}]

Since  $X$ is  a weak solution, see e.g. \cite{Peszat_Zabczyk_2007},
for each $n$,
 \begin{equation} \label{gt}
d \langle X(t), e_n \rangle _H = \langle X(t), A^*e_n \rangle _H dt + \beta_n dL^n (t).
 \end{equation}
 Denote the processes $\langle X(t), e_n \rangle _H $ by $X^n(t)$.

Passing to subsequences we can assume  that for some $r_2>0$ and for
all $n$, $\beta_n\geq r_2$.  Let $\tau_{n}$ denote the moment of the
 first jump of the process $L_n$ of the absolute size greater
than or equal  $r_1$. These numbers form, with
probability $1$ (say, for any $\omega \in \Omega_0 $) a dense subset
of the interval $(0,+ \infty)$
 (see Lemma \ref{lem-3}), and, at
 each moment $\tau_{n}$, the process $\beta_n L_n$ has a jump of the
 absolute size at least $r_1r_2$.

 Arguing by contradiction, let us assume that
 there exists $\omega \in \Omega_0 $ such that
  at   some time  $ t_{\omega}$
  the left limit or the right limit of $X(\cdot , \omega)$
   exists.
  Let us  assume that $t_{\omega} \ge 0 $ and that
  there exists the right limit at $t_{\omega}$.
 By Lemma \ref{lem} this means that, for any $\epsilon >0$,
 there exists  $\delta >0$ such that
 \begin{equation} \label{fr}
 osc_{ X (\cdot, \omega)}((t_{\omega} ,t_{\omega}
   + \delta ))<\eps.
\end{equation}
 Let $\eps>0$ be any number smaller than $r_1r_2$ and let
 $\delta  >0$ be such that \eqref{fr} holds.
   There exists a natural number $n_0$ (depending also on $\omega$)
     such that
 $$\tau_{n_0}(\omega)\in (t_{\omega} ,t_{\omega}
   + \delta ).
    $$
 Note that  $\Vert X(t, \omega)- X(s, \omega) \Vert \geq |X^n(t, \omega)-
 X^n(s, \omega)|$, for any $n \ge 1$, $t, s \ge 0$. Using  also equation
  \eqref{gt}, we infer
$$\liminf_{s\toup \tau_{n_0}(\omega)}
 \Vert X(\tau_{n_0}(\omega), \omega)-X(s, \omega)\Vert
 \ge \liminf_{s\toup \tau_{n_0}(\omega)}
 | X^{n_0}(\tau_{n_0}(\omega), \omega)-X^{n_0}(s, \omega) |
 $$$$= \liminf_{s\toup \tau_{n_0}(\omega)}
 |  \beta_{n_0} L_{n_0}(\tau_{n_0}(\omega), \omega)- \beta_{n_0}
  L_{n_0}(s, \omega) |
 \geq r_1r_2 >\eps,
$$
 which contradicts the statement \eqref{fr}.
\end{proof}

The result raises some natural questions.\vspace{3mm}

\emph{ Question 1.} Does the assumption that the sequence $(\beta_n)$
tends to zero imply existence of a c{\`a}dl{\`a}g modification of
$X$?\vspace{3mm}

\emph{ Question 2.} Does the assumption that $e_n \in D(A^*)$ is essential for the validity of the Theorem \ref{thm-non_cadlag1}?\vspace{3mm}

\emph{ Question 3.} Is the requirement that the process $L$ evolves in $H$ also necessary for the existence of $H$- c{\`a}dl{\`a}g modification of $X$?
\vspace{3mm}

\section{Heat equation with $\alpha$--stable noise}

In the present section we assume that  $A=\Delta$ is the Laplace operator with the Dirichlet boundary conditions on $\mathcal{O}=(0,\pi)$. Let  $H=
L^2(\mathcal{O})$ and
\begin{equation}
\begin{array}{rclrcl}
D(A)&=&H^2(\mathcal{O})\cap H_0^1(\mathcal{O}),\;\; Au&=&\Delta u,\; u\in D(A).
\end{array}
\end{equation}

It is well known that $A$ is a self-adjoint negative operator on $H$ and that $A^{-1}$ is compact. Hence $A$ is  of diagonal type, with respect to
eigenfunctions $\big(e_j\big)_{j=1}^\infty$, where
$$
e_j (\xi) = \sqrt\frac{2}{\pi} \sin( j \xi), \;\; \xi \in \mathcal{O},\; j\in\mathbb{N}^\ast.$$
 The
corresponding eigenvalues of the operator $-A$ are
$$
\lambda_j = j^2, \; j\in\mathbb{N}^\ast.
$$
Setting  $\beta_j=1$,  $j\in\mathbb{N}$, and assuming that  $L^n$ are independent, identically distributed, c{\`a}dl{\`a}g, real valued $\alpha -$ stable
L\'evy processes, $\alpha \in (0,2]$,  we will work with  a ``white'' $\alpha -$ stable  process:
$$L(t)=\sum_{j=1}L^j(t)e_j, \; t\geq 0,$$ and with the solution $X$ of
\begin{equation}\label{e1}
dX(t)= \Delta X(t)+dL(t),\; t\geq 0,\;\; X(0)=0.
\end{equation}

\noindent For $\delta \geq 0$, define $H_\delta=D(A^{\delta/2})$ with the naturally defined scalar product. Then in particular $H_0=H$, $H_1=H_0^1(\mathcal{O})$ and $H_2=D(A)$
 and moreover,
$$H_\delta=\Big\{x\in H\colon  \sum_{j=1}^\infty \lambda_j^\delta |x_j|^2<\infty \Big\}
$$
where $x_j:=\lb x,e_j\rb$, $j\in \mathbb{N}^\ast$. For  $\delta < 0$, by $H_\delta$ we denote  the extrapolation space which can be  defined as
$D(A^{-\delta/2})$, or more precisely as the completion of the space $H$ with respect to the norm
$ \vert x\vert_\delta:=\vert A^{-\delta/2}x\vert, \; x\in H$.
The Hilbert space $H_\delta$ can then be isometrically identified with the weighted space $l^2_{\delta}$,
$$l^2_{\delta} = \Big\{x=(x_j)\colon  \sum_{j=1}^\infty \lambda_j^\delta |x_j|^2<\infty \Big\},$$
equipped with the norm $ \vert x\vert_\delta:= (\sum_{j=1}^\infty \lambda_j^\delta |x_j|^2)^{1/2}$.

\begin{proposition}\label{P1} Assume that $X$ solves equation (\ref{e1}) and  $\alpha \in (0,2)$. Then:

i) The process $L$ is \,\, $H_{\delta}$-- valued, and thus $H_{\delta}$--c{\`a}dl{\`a}g, \,\,if and only if \,\, $\delta < - 1/{\alpha}$.

ii)The process  $X$ is \,\,$ H_{\delta}$--valued \,\,if and only if \,\, \,\,$\delta <  1/{\alpha}$.

iii) If $ \delta < - 1/{\alpha}$ then the process $X$ is $H_{\delta}$--c{\`a}dl{\`a}g.

iv) If $ \delta \geq 0$ then the process $X$ has no $H_{\delta}$ --c{\`a}dl{\`a}g modification.
\end{proposition}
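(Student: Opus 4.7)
The plan is to treat the four parts in sequence. Parts (i) and (ii) rest on a classical criterion for almost-sure convergence of series of non-negative heavy-tailed independent variables; part (iii) is an immediate consequence of (i) via Kotelenez's inequality; and part (iv) reduces to Theorem \ref{thm-non_cadlag1} applied to the equation rewritten in $H_\delta$.

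For (i), identifying $H_\delta$ with $l^2_\delta$ gives $L(t)\in H_\delta$ a.s.\ iff $\sum_j j^{2\delta}|L^j(t)|^2<\infty$ a.s. The variables $|L^j(t)|^2$ are iid, non-negative and have tail $\mathbb{P}(|L^j(t)|^2>x)\sim c_\alpha t\,x^{-\alpha/2}$ with $\alpha/2\in(0,1)$. For such variables and positive weights $a_j$, Kolmogorov's three-series theorem gives $\sum_j a_j|L^j(t)|^2<\infty$ a.s.\ iff $\sum_j a_j^{\alpha/2}<\infty$ (truncation yields $\mathbb{E}[a_j|L^j(t)|^2\wedge 1]\asymp a_j^{\alpha/2}$). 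This produces $\sum_j j^{\delta\alpha}<\infty$, i.e.\ $\delta<-1/\alpha$; and then $L$ is an $H_\delta$-valued L\'evy process, hence $H_\delta$-c\`adl\`ag. For (ii), I use the explicit representation $X^j(t)=\int_0^t e^{-\lambda_j(t-s)}dL^j(s)$ together with the scaling property of $\alpha$-stable stochastic integrals to write $X^j(t)\stackrel{d}{=}c_j L^j(1)$ with $c_j^\alpha=(1-e^{-\alpha\lambda_j t})/(\alpha\lambda_j)$, so that $c_j^2\asymp \lambda_j^{-2/\alpha}$. Applying the same criterion to $\sum_j j^{2\delta}c_j^2|L^j(1)|^2$ yields $\sum_j j^{\delta\alpha-2}<\infty$, i.e.\ $\delta<1/\alpha$.

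Item (iii) is then immediate: for $\delta<-1/\alpha$, $L$ is a genuine $H_\delta$-valued L\'evy process and Kotelenez's maximal inequality for stochastic convolutions (cited in the introduction) provides an $H_\delta$-c\`adl\`ag modification of $X$. For (iv) I split into two cases. If $\delta\geq 1/\alpha$, part (ii) gives $X(t)\notin H_\delta$ almost surely for each $t>0$, which prevents any $H_\delta$-valued modification from existing. If $0\leq\delta<1/\alpha$, I recast the equation in the Hilbert space $H_\delta$ using the rescaled orthonormal basis $f_j:=\lambda_j^{-\delta/2}e_j$, which still lies in $D(A)=D(A^*)$ and remains an eigenbasis of $A$. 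In this basis the noise reads $L(t)=\sum_j \lambda_j^{\delta/2}L^j(t)f_j$, so the effective coefficients are $\beta_j^{(\delta)}=j^\delta$, which for $\delta\geq 0$ do not converge to $0$. Theorem \ref{thm-non_cadlag1} applied in $H_\delta$ then rules out any $H_\delta$-c\`adl\`ag modification.

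The most delicate step is (ii), where one must simultaneously identify the law of $X^j(t)$ via $\alpha$-stable scaling, obtain the sharp asymptotics $c_j\asymp\lambda_j^{-1/\alpha}$ uniformly in $j$, and apply the three-series criterion to weights straddling the summability threshold. Parts (i), (iii) and (iv) then follow either by the same criterion or by a clean reduction to Theorem \ref{thm-non_cadlag1} via the basis rescaling, the main conceptual point in (iv) being that although $\beta_j=1$ is assumed throughout, moving to $H_\delta$ amplifies the coefficients to $j^\delta$, exactly the regime where Theorem \ref{thm-non_cadlag1} bites.
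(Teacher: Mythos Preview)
Your argument is correct and follows the same overall strategy as the paper: a summability criterion for (i) and (ii), Kotelenez's maximal inequality for (iii), and Theorem~\ref{thm-non_cadlag1} for (iv). The differences are in the level of detail. For (i) and (ii) the paper simply invokes \cite[Proposition~3.3]{Priola+Z_2009}, which gives directly that the relevant series lives in $H_\delta$ iff $\sum_j |\lambda_j^{\delta/2}|^\alpha<\infty$; you instead derive this from scratch via the three-series theorem and the stable tail asymptotics, which is a more elementary and self-contained route yielding the identical condition $\sum_j j^{\delta\alpha}<\infty$ (resp.\ $\sum_j j^{\delta\alpha-2}<\infty$). For (iv) the paper just says ``direct consequence of Theorem~\ref{thm-non_cadlag1}'', whereas you make the reduction explicit by passing to the $H_\delta$-orthonormal basis $f_j=\lambda_j^{-\delta/2}e_j$ and observing that the effective coefficients become $j^\delta\not\to 0$; you also separate out the case $\delta\ge 1/\alpha$, where $X$ is not even $H_\delta$-valued by (ii), a point the paper leaves implicit. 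Both presentations lead to the same conclusions; yours trades a citation for an explicit computation and spells out the basis rescaling that the paper takes for granted.
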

\begin{proof}  $i)$ It follows  from \cite[Proposition 3.3]{Priola+Z_2009} that the process $L$ takes values in the space $H_\delta$ if and only if
$$
\sum \vert \lambda_j^{\delta/2}\vert^\alpha<\infty.
$$
Since $\lambda_j=j^2$, $\vert \lambda_j^{\delta/2}\vert^\alpha=j^{\delta\alpha}$, we infer that the
process $L$ takes values in the space $H_\delta$ if and only if  $\delta<-1/\alpha$.

$ii)$ The argument from the proof of $i)$ applies.

$iii)$ By the maximal inequalities for stochastic convolution, \cite{Kotelenez_1987},  we infer that if the procees $L$ is $H_{\delta}$--valued then the process $X$ is $H_\delta$-c{\`a}dl{\`a}g.

$iv)$ This is a direct consequence of our Theorem \ref{thm-non_cadlag1}.
\end{proof}
It is of interest to compare the stable case $\alpha \in (0,2)$ with the Gaussian case $\alpha = 2$.
\begin{proposition}\label{P2} Assume that $X$ solves equation (\ref{e1}) and  $\alpha = 2$. Then

i) The process $L $ is $H_{\delta}$-- valued, and thus $H_{\delta}$--continuous, \,\,if and only if \,\, $\delta < - 1/2$.

ii)The process  $X$ is $H_{\delta}$-- valued   \,\,if and only if \,\, $\delta < 1/2$.

iii)The process is  $X$  is $H_{\delta}$-- continuous \,\,if and only if \,\,$\delta < 1/2$.
\end{proposition}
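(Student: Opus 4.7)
The plan is to handle the three parts in sequence, with each reduced to a convergence criterion for a series of independent Gaussian variables. Throughout, I will use the diagonalization $L(t)=\sum_n W^n(t)e_n$ and $X^n(t):=\langle X(t),e_n\rangle = \int_0^t e^{-\lambda_n(t-s)}\,dW^n(s)$, where $(W^n)$ are independent standard Wiener processes.

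For part $(i)$, a direct moment computation gives $\mathbb{E}|L(t)|_\delta^2 = t\sum_n \lambda_n^\delta = t\sum_n n^{2\delta}$, which is finite precisely when $\delta<-1/2$. The ``only if'' direction uses Kolmogorov's three-series theorem on the sum of independent nonnegative Gaussian squares $\sum_n \lambda_n^\delta (W^n(t))^2$: such a series of squared independent centered Gaussians converges almost surely iff the sum of variances is finite. Once $L$ takes values in $H_\delta$, it is a genuine $H_\delta$-valued Wiener process, and continuity of trajectories is the classical property of Hilbert-valued Brownian motion.

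For part $(ii)$, each $X^n(t)$ is Gaussian with variance $(1-e^{-2\lambda_n t})/(2\lambda_n)$, and $|X(t)|_\delta^2=\sum_n \lambda_n^\delta |X^n(t)|^2$. Applying the same three-series criterion, $X(t)\in H_\delta$ a.s.\ iff $\sum_n \lambda_n^{\delta-1}(1-e^{-2\lambda_n t})<\infty$. Since $\lambda_n=n^2\to\infty$, for any $t>0$ the factor $(1-e^{-2\lambda_n t})$ is bounded away from zero for large $n$, so convergence is equivalent to $\sum_n n^{2\delta-2}<\infty$, i.e.\ $\delta<1/2$.

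For part $(iii)$, the ``only if'' direction is immediate from $(ii)$, since a non-$H_\delta$-valued process cannot be $H_\delta$-continuous. For ``if'', assume $\delta<1/2$ and apply the Da Prato--Kwapie\'n--Zabczyk factorization: pick $\gamma\in(0,1/2)$ with $\delta+2\gamma<1/2$ and write
\[
X(t)=c_\gamma\int_0^t(t-s)^{\gamma-1}S(t-s)Y(s)\,ds,\qquad Y(s)=\int_0^s(s-u)^{-\gamma}S(s-u)\,dL(u).
\]
A diagonal computation shows
\[
\mathbb{E}|Y(s)|_\delta^2=\sum_n \lambda_n^\delta\int_0^s(s-u)^{-2\gamma}e^{-2\lambda_n(s-u)}\,du\le C\sum_n \lambda_n^{\delta+2\gamma-1},
\]
which is finite under our choice of $\gamma$. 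By Gaussianity, higher moments of $|Y(s)|_\delta$ are bounded as well, and for $p>1/(2\gamma)$ the H\"older-type estimate combined with the contraction of $S(t)$ on $H_\delta$ yields continuity of $X$ in $H_\delta$.

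The main obstacle is the ``only if'' half of $(ii)$: going from divergence of the expectation $\mathbb{E}|X(t)|_\delta^2$ to almost sure non-membership in $H_\delta$. This is resolved by the three-series theorem (equivalently Kolmogorov's zero-one law applied to $\{\sum_n \lambda_n^\delta |X^n(t)|^2<\infty\}$), which is available precisely because of the Gaussian tail behavior and independence of the coordinates. The factorization step in $(iii)$ is then routine, with the key tuning being to place $\gamma$ in the window dictated by $\delta<1/2$.
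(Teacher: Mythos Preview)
Your argument is correct and follows essentially the same route as the paper: the series-convergence criteria you derive for (i) and (ii) are exactly what the paper invokes (via the same reasoning as in Proposition~\ref{P1}), and your explicit factorization argument for (iii) is precisely the Da~Prato--Zabczyk sufficient condition $\int_0^T t^{-\beta}\|e^{t\Delta}\|^2_{L_{HS}(H_0,H_\delta)}\,dt<\infty$ (with $\beta=2\gamma$) that the paper cites. One minor slip: the factorization lemma needs $p>1/\gamma$ rather than $p>1/(2\gamma)$, but since all Gaussian moments of $Y$ are available this does not affect the conclusion.
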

\begin{proof} This is a well known result. Parts $i)$ and $ii)$ can be proved in the same way as in the previous theorem. To prove $iii)$ it is enough to apply a sufficient condition for continuity from \cite{DaPrato+Zabczyk}, namely that $\exists {\beta >0},\exists T > 0 $ such that
$$
\int_0^T t^{-\beta} \|e^{t\Delta}\|^2_{L_{HS}(H_0 , H_{\delta})}dt <+\infty,
$$
where $\|\cdot\|_{L_{HS}(H_0 , H_{\delta})}$ denotes the Hilbert--Schmidt norm of an operator from $H_0$ into $H_{\delta}$. One can also use \cite{Iscoe_1990}.
\end{proof}

We see that the regularity result for the Gaussian Ornstein--Uhlenbeck process does not have a precise analog for the  $\alpha$--stable process. We
have the following natural open question where  our Theorem \ref{thm-non_cadlag1} is here not applicable.

\emph{ Question 4.} Is the process $X$ from Proposition \ref{P1}, $H_\delta$-c{\`a}dl{\`a}g for $\delta \in [-1/\alpha,0)$?

\end{document}